\titleformat*{\subsection}{\Large\bfseries}
\titleformat*{\subsubsection}{\large\bfseries}
\titleformat*{\paragraph}{\large\bfseries}
\titleformat*{\subparagraph}{\large\bfseries}
\renewcommand{\@seccntformat}[1]{\csname the#1\endcsname. }
\theoremstyle{plain}
\newtheorem{theorem}{Theorem}
\theoremstyle{definition}
\newtheorem{definition}[theorem]{Definition}
\providecommand{\keywords}[1]{{\bf{Keywords:}} #1}
\title{An infinitary Zero sum theorem
\footnote{\keywords{Zero sum theory, Ramsey Theory, Algebra of the Stone-\v{C}ech compactification}}}
\date{}
\author{Sayan Goswami
\footnote{Department of Mathematics, 
           University of Kalyani, 
           Kalyani-741235,
           Nadia, West Bengal, India
           {\bf sayan92m@gmail.com}
           }
           }
\begin{document}
\maketitle

\begin{abstract}
\noindent Erd\H{o}s-Ginzburg-Ziv theorem says that if there are $2n-1$ number is
given, then there are $n$ numbers such that their sum is divided
by $n$. We will connect this theorem with the Ramsey theoretic large
sets and will prove an infinitary version of this theorem. In our proof we will use the methods of ultrafilters. But one may proceed using methods of Topological dynamics.
\end{abstract} 


Erd\H{o}s-Ginzburg-Ziv theorem \cite{key-1} says that if we have $2n-1$
numbers, then there exists $n$ numbers among them whoose sum congruent
to $0$ modulo $n$. If $r,n\in\mathbb{N}$, then for any $r$-partition
of a finite set of $\left(2n-1\right)r$ numbers there is a set having
$\left(2n-1\right)$ elements and so there is a cell containing $n$
elements whose sum is divisible by $n$. Instead of taking $\left(2n-1\right)r$ elements, one can proceed using $rn^2$ elements. Now we will provide an infinitary
version of this theorem. Here we will provide a joint extension of the
Zero sum theorem and Hindman's theorem \cite{key-3}. We
are motivated by the Central Sets Theorem \cite{key-2}, established
by H. Furstenberg, which is known as joint extension of van der Waerden's
theorem \cite{key-6} and Hindman's theorem. For details on Central
Sets Theorem , one can see \cite{key-5}. Throughout the article,
$\mathcal{P}_{f}\left(\mathbb{N}\right)$ denotes the set of all non-empty
finite subsets of $\mathbb{N}$.

As it will be needed in our proof, we now give a brief review about
the Stone-\v{C}ech compactification of a discrete semigroup. For
details readers are invited to read \cite{key-4}. Let $\left(S,\cdot\right)$
be any discrete semigroup and denote its Stone-\v{C}ech compactification
by $\beta S$. $\beta S$ is the set of all ultrafilters on $S$,
where the points of $S$ are identified with the principal ultrafilters.
The basis for the topology is $\left\{ \bar{A}:A\subseteq S\right\} $,
where $\bar{A}=\left\{ p\in\beta S:A\in p\right\} $. The operation
of $S$ can be extended to $\beta S$ making $\left(\beta S,\cdot\right)$
a compact, right topological semigroup with $S$ contained in its
topological center. That is, for all $p\in\beta S$, the function
$\rho_{p}:\beta S\rightarrow\beta S$ is continuous, where $\rho_{p}\left(q\right)=q\cdot p$
and for all $x\in S$, the function $\lambda_{x}:\beta S\rightarrow\beta S$
is continuous, where $\lambda_{x}\left(q\right)=x\cdot q$. For $p,q\in\beta S$
and $A\subseteq S$, $A\in p\cdot q$ if and only if $\left\{ x\in S:x^{-1}A\in q\right\} \in p$,
where $x^{-1}A=\left\{ y\in S:x\cdot y\in A\right\} $. 

Since $\beta S$ is a compact Hausdorff right topological semigroup,
it has a smallest two sided ideal denoted by $K\left(\beta S\right)$,
which is the union of all of the minimal right ideals of $S$, as
well as the union of all of the minimal left ideals of $S$. Every
left ideal of $\beta S$ contains a minimal left ideal and every right
ideal of $\beta S$ contains a minimal right ideal. The intersection
of any minimal left ideal and any minimal right ideal is a group,
and any two such groups are isomorphic. Any idempotent $p$ in $\beta S$
is said to be minimal if and only if $p\in K\left(\beta S\right)$.
\begin{definition}
Let $\left(S,\cdot\right)$ be a semigroup and $A\subseteq S$, then
\end{definition}

\begin{enumerate}
\item The set $A$ is thick if and only if for any finite subset $F$ of
$S$, there exists an element $x\in S$ such that $F\cdot x\subset A$.
This means the sets which contains a translation of any finite subset.
For example one can see $\cup_{n\in\mathbb{N}}\left[2^{n},2^{n}+n\right]$
is a thick set in $\mathbb{N}$.
\item The set $A$ is syndetic if and only if there exists a finite subset
$G$ of $S$ such that $\bigcup_{t\in G}t^{-1}A=S$. That is, with
a finite translation if, the set which covers the entire semigroup,
then it will be called a Syndetic set. For example the set of even
and odd numbers are both syndetic in $\mathbb{N}$.
\item The sets which can be written as an intersection of a syndetic and
a thick set are called $\mathit{Piecewise}$ $\mathit{syndetic}$
sets. More formally a set $A$ is $\mathit{Piecewise}$ $\mathit{syndetic}$
if and only if there exists $G\in\mathcal{P}_{f}\left(S\right)$ such
that for every $F\in\mathcal{P}_{f}\left(S\right)$, there exists
$x\in S$ such that $F\cdot x\subseteq\bigcup_{t\in G}t^{-1}A$. Clearly
the thick sets and syndetic sets are natural examples of $\mathit{Piecewise}$
$\mathit{syndetic}$ sets. From definition one can immediately see
that $2\mathbb{N}\cap\bigcup_{n\in\mathbb{N}}\left[2^{n},2^{n}+n\right]$
is a nontrivial example of $\mathit{Piecewise}$ $\mathit{syndetic}$
sets in $\mathbb{N}$.
\item Then a subset $A$ of $S$ is called central if and only if there
is some minimal idempotent $p$ such that $A\in p$.
\end{enumerate}
\begin{theorem}
Let $m,n\in\mathbb{N}$, and $\langle x_{i,1}\rangle_{i=1}^{\infty}$,
$\langle x_{i,2}\rangle_{i=1}^{\infty}$,$\ldots$, $\langle x_{i,m}\rangle_{i=1}^{\infty}$
be $m$ distinct sequences. If $B\subseteq\mathbb{N}$ be a central
set, then for each $j\in\left\{ 1,2,\ldots,m\right\} $, there exists
sequences of finite sets $\left(F_{i,j}\right)_{i=1}^{\infty}\subseteq\langle x_{i,j}\rangle_{i=1}^{\infty}$
such that $F_{i_{1},j}\cap F_{i_{2},j}=\emptyset$ for $i_{1}\neq i_{2}$,
and a sequence of elements $\left(z_{i,j}\right)_{i=1}^{\infty}$
in $\mathbb{N}$ such that, for each $j\in\left\{ 1,2,\ldots,m\right\} $,
\begin{enumerate}
\item $\mid F_{i,j}\mid=n$, for each $i\in\mathbb{N}$,
\item $\sum_{t\in F_{i,j}}t\equiv0\,\left(\mod\,n\right)$ for each $i\in\mathbb{N}$,
\item $\left(F_{i_{1},j_{1}}+z_{i_{1},j_{1}}\right)+\left(F_{i_{2},j_{2}}+z_{i_{2},j_{2}}\right)+\cdots+\left(F_{i_{k},j_{k}}+z_{i_{k},j_{k}}\right)\subseteq B$
for each $i_{1}<i_{2}<\ldots<i_{k}$ and $j_{p}\in\left\{ 1,2,\ldots,m\right\} $
for each $p$.
\end{enumerate}
\end{theorem}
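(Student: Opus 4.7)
I will combine an iterated application of the Erd\H{o}s--Ginzburg--Ziv (EGZ) theorem with an ultrafilter recursion inside a minimal idempotent $p\in\beta\mathbb{N}$ that contains $B$. Recall that for any $A\in p$, the star-set $A^{\star}=\{a\in A:A-a\in p\}$ again lies in $p$, and for every $a\in A^{\star}$ we have $A-a\in p$; this is the standard consequence of the idempotence $p=p+p$.

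\textbf{Step 1 (EGZ extraction of zero-sum subsets).} For each $j\in\{1,\ldots,m\}$, iterate EGZ on $\langle x_{i,j}\rangle_{i=1}^{\infty}$: from the first $2n-1$ terms extract an $n$-subset with sum divisible by $n$, call it $F_{1,j}$; from the next $2n-1$ unused terms extract $F_{2,j}$; and so on. This immediately yields pairwise disjoint $n$-element sets $F_{i,j}\subseteq\langle x_{i,j}\rangle$ with $\sum_{t\in F_{i,j}}t\equiv 0\pmod{n}$, settling conditions (1), (2), and the disjointness clause.

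\textbf{Step 2 (Recursive construction of shifts via $p$-star iterates).} I will build $z_{i,j}$ by induction on $i$, carrying a nested filtration $B\supseteq A_{0}\supseteq A_{1}\supseteq\cdots$ of sets in $p$ together with the strengthened invariant: after stage $N$, for every partial Minkowski-sum element $\Sigma=\sum_{\ell=1}^{k}(t_{\ell}+z_{i_{\ell},j_{\ell}})$ over stages $\leq N$ and every $a\in A_{N}$, we have $\Sigma+a\in B$ (in particular $\Sigma\in B$). At stage $N+1$, for each $j$ I pick $z_{N+1,j}$ in the finite intersection $\bigcap_{t\in F_{N+1,j}}(A_{N}^{\star}-t)$, which lies in $p$ (hence is non-empty) once $F_{N+1,j}$ has been arranged to have its elements in $A_{N}^{\star\star}$. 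I then define $A_{N+1}$ as the intersection of $A_{N}^{\star}$ with the translates $A_{N}-s$ over all newly formed partial sums $s$, preserving the invariant.

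\textbf{Main obstacle.} The principal difficulty is the Minkowski-sum requirement: each new shift $z_{N+1,j}$ must be compatible simultaneously with every prior partial sum \emph{and} every one of the $n$ elements of $F_{N+1,j}$, so the number of constraints at stage $N+1$ grows by a factor of $n$ per level. This is handled by the key observation that every minimal idempotent of $\beta\mathbb{N}$ satisfies $n\mathbb{N}\in p$ for every $n$ (since the image of $p$ under the quotient map $\mathbb{N}\to\mathbb{Z}/n$ must be an idempotent of the finite semigroup $\mathbb{Z}/n$, necessarily $0$). Consequently, after pigeonholing $\langle x_{i,j}\rangle$ into a fixed residue class $r_{j}\pmod{n}$ and choosing $z_{i,j}\equiv -r_{j}\pmod{n}$, the shifted sets $F_{i,j}+z_{i,j}$ land inside $B\cap n\mathbb{N}\in p$; the finite intersection property of $p$ then closes the induction at each finite stage.
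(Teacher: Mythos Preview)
Your proposal has a genuine gap at the heart of Step~2. You first extract the zero-sum sets $F_{i,j}$ from the raw sequences using only EGZ, and only afterwards try to find a single shift $z_{N+1,j}$ with $F_{N+1,j}+z_{N+1,j}\subseteq A_{N}^{\star}$. That amounts to asking for $z_{N+1,j}\in\bigcap_{t\in F_{N+1,j}}(A_{N}^{\star}-t)$, and you correctly note this intersection is in $p$ provided every $t\in F_{N+1,j}$ lies in $A_{N}^{\star\star}$. But the $t$'s are terms of the \emph{given, arbitrary} sequences $\langle x_{i,j}\rangle$; they need not meet $A_{N}^{\star\star}$, nor even $B$, at all. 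So this hypothesis cannot be ``arranged'', and in fact it contradicts your own Step~1, where the $F_{i,j}$ were fixed in advance independently of the $A_{N}$'s. The attempted rescue via $n\mathbb{N}\in p$ is a correct fact about idempotents, but it only controls residues: choosing $z_{i,j}\equiv -r_{j}\pmod n$ puts $F_{i,j}+z_{i,j}$ into $n\mathbb{N}$, not into $B\cap n\mathbb{N}$. Membership in $p$ of a set says nothing about whether $n$ prescribed points can be pushed into it by a single translate.

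The paper's proof reverses the order of operations, and that reversal is exactly the missing idea. Since $B^{\star}\in p$ is piecewise syndetic, there is a finite $G$ with $\bigcup_{s\in G}(-s+B^{\star})$ thick. Thickness lets one translate a block of $(2n-1)\,|G|$ consecutive terms of each sequence by a single $x$ into $\bigcup_{s\in G}(-s+B^{\star})$; pigeonhole on $G$ then yields $2n-1$ terms of each sequence which all land in $B^{\star}$ under the \emph{same} shift $z_{1,j}=x+s$. Only now is EGZ applied, to these $2n-1$ already-shiftable terms, producing $F_{1,j}$ with $|F_{1,j}|=n$, zero sum mod $n$, and $F_{1,j}+z_{1,j}\subseteq B^{\star}$ automatically. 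The inductive step repeats this inside $C=B^{\star}\cap\bigcap_{z\in Z}(-z+B^{\star})\in p$, which is again piecewise syndetic. In short: find the common shift first (via thickness), then select the zero-sum subset among terms already known to be compatible with that shift. Your scheme selects first and shifts afterwards, and there is no mechanism to make that work for arbitrary sequences.
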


\begin{proof}
Let $p\in\beta\mathbb{N}$ be a minimal idempotent such that $B\in p$.
As $B\in p,$
we have $B^{*}=\left\{ x\in B:-x+B\in p\right\} \in p$ and so for $x\in B^{*}$,
$-x+B^{*}\in p$. As $B^{*}\in p,$it is piecewise syndetic. So there
exists a finite set $G$ such that $\cup_{t\in G}\left(-t+B^{*}\right)$
is a thick set. Let $\mid G\mid=r$, hence for the finite set $E=\left\{ \langle x_{i,1}\rangle_{i=1}^{\left(2n-1\right)r},\langle x_{i,2}\rangle_{i=1}^{\left(2n-1\right)r},,\ldots,\langle x_{i,m}\rangle_{i=1}^{\left(2n-1\right)r}\right\} $,
there exists an element $x\in\mathbb{N}$ such that $E+x\subset\cup_{t\in G}\left(-t+B^{*}\right).$
Hence for each $j\in\left\{ 1,2,\ldots,m\right\} $, there exists
$\left(2n-1\right)$ elements from $\langle x_{i,j}\rangle_{i=1}^{\left(2n-1\right)r}$
such that its translation by $z_{1,j}=x+s$, for some $s\in G$, is
contained in $B^{*}$. Hence from Erd\H{o}s-Ginzburg-Ziv theorem, there
exists a finite set $F_{1,j}$ of $n$ elements such that $\sum_{t\in F_{1,j}}t\equiv0\,\left(\mod\,n\right)$
and by construction $F_{1,j}+z_{1,j}\in B^{*}$. Let us assume that
for $i=1,2,\ldots,l$, the above configuration lies in $B^{*}$ by
the induction hypothesis, i.e. 

1. $\mid F_{i,j}\mid=n$, for each $i\in\left\{ 1,2,\ldots,l\right\} $,

2. $\sum_{t\in F_{i,j}}t\equiv0\,\left(\mod\,n\right)$ for each $i\in\left\{ 1,2,\ldots,l\right\} $,


3. $\left(F_{i_{1},j_{1}}+z_{i_{1},j_{1}}\right)+\left(F_{i_{2},j_{2}}+z_{i_{2},j_{2}}\right)+\cdots+\left(F_{i_{k},j_{k}}+z_{i_{k},j_{k}}\right)\subseteq B^{*}$
for each $i_{1}<i_{2}<\ldots<i_{k}$ in $\left\{ 1,2,\ldots,l\right\} $
and $j_{p}\in\left\{ 1,2,\ldots,m\right\} $ for each $p\in\left\{ 1,2,\ldots,l\right\} $.

\noindent Let $Z=\left\{ \sum_{p=1}^{k}\left(F_{i_{p},j_{p}}+z_{i_{p},j_{p}}\right):i_{1}<\ldots<i_{k}\in\left\{ 1,\ldots,l\right\} ,j_{p}\in\left\{ 1,\ldots,m\right\} \right\} $.
Now choose $C=B^{*}\cap\bigcap_{z\in Z}\left(-z+B^{*}\right)\in p$.
Then choosing sufficiently large blocks of numbers choosing from the
$m$ sequences and using the same argument as above there exists finite
sets $F_{\left(l+1\right),j}$ of $n$ elements such that $\sum_{t\in F_{\left(l+1\right),j}}t\equiv0\,\left(\mod\,n\right)$
and by induction hypothesis $\left(F_{i_{1},j_{1}}+z_{i_{1},j_{1}}\right)+\left(F_{i_{2},j_{2}}+z_{i_{2},j_{2}}\right)+\cdots+\left(F_{i_{k},j_{k}}+z_{i_{k},j_{k}}\right)\in B^{*}$
for each $i_{1}<i_{2}<\ldots<i_{k}$ in $\left\{ 1,2,\ldots,l+1\right\} $
and $j_{p}\in\left\{ 1,2,\ldots,m\right\} $ for each $p\in\left\{ 1,2,\ldots,l+1\right\} $. This completes the induction.
\end{proof}

\end{document}